\theoremstyle{plain} %text of this environment is typesetted in italics
\newtheorem{theorem}{\indent\bf Theorem}[section]
\newtheorem{lemma}[theorem]{\indent\bf Lemma}
\newtheorem{claim}[theorem]{\indent\bf Claim}
\newtheorem{conjecture}[theorem]{\indent\bf Conjecture}
\theoremstyle{definition} %text of this environment is typesetted in roman letters
\newcommand{\twodimvector}[2]{
	\begin{pmatrix}
		#1\\#2
	\end{pmatrix}
}
\begin{document}
	
	\title[]{Subharmonic variation of Azukawa pseudometrics for balanced domains}
	%title of paper and the running head option
	
	\author[G. Hosono]{Genki Hosono} %first author's name and the running head option
	
	%%%%%%%%%%%%%%% footnote %%%%%%%%%%%%%%%%
	\subjclass[2010]{ %2010 MSC numbers
		32U35.
	}
	%In case \subjclass[2010] command is not effective
	%(or the version of amsart.cls is old), write as follows instead:
	%\renewcommand{\thefootnote}{\fnsymbol{footnote}}
	%\footnote[0]{2010\textit{ Mathematics Subject Classification}.
	%Primary 00; Secondary 00.}
	%
	\keywords{ %key words and phrases
		Azukawa pseudometric, pluricomplex Green functions
	}
	%\thanks{ %acknowledgment of support etc. if any
	%$^{*}$Thanks.
	%}
	%%%%%%%%%%%% Authors' addresses %%%%%%%%%%%%%
	\address{% First Author
		Graduate School of Mathematical Sciences, The University of Tokyo \endgraf
		3-8-1 Komaba, Meguro-ku, Tokyo, 153-8914 \endgraf
		Japan
	}
	\email{genkih@ms.u-tokyo.ac.jp}
	%%%%%%%%%%%%%%%%%%%%%%%%%%%%%%%%%%%%%%%%%
	\maketitle
	\begin{abstract}
	We consider the subharmonicity property of the logarithm of Azukawa pseudometrics of pseudoconvex domains under pseudoconvex variations.
	We prove that such a property holds for the variation of balanced domains.
	We also give a non-balanced example.
	The relation of the volume of Azukawa indicatrix and the estimate in the Ohsawa-Takegoshi $L^2$-extension theorem is also discussed.
	\end{abstract}
	
	\section{Introduction}
	Let $\Omega \subset \mathbb{C}^n$ be a bounded hyperconvex domain.
	For a fixed point $w \in \Omega$, {\it the pluricomplex Green function} $g_{\Omega, w}$ on $\Omega$ with a pole $w$ is defined as
	$$g_{\Omega, w}:= \sup\{u \in PSH(\Omega): u<0 \text{ and } u - \log |\cdot - w| \text{ is bounded above near }w  \}.$$
	It is known that $g_{\Omega,w}$ itself also satisfies the condition in the right-hand side.
	Pluricomplex Green functions are pluripotential theoretic generalizations of Green functions, which are the solutions of the {Laplace equation} 
	$$\left\{
	\begin{array}{cc}
	\Delta g_{\Omega, w} = \delta_w &\text{ on } \Omega\\
	g_{\Omega,w} = 0 &\text{ on } \partial \Omega
	\end{array}\right.$$
	where $\delta_w$ means the Dirac measure concentrated at $w$.
	Similarly, pluricomplex Green functions are the solutions of the {\it complex Monge-Amp\`ere equations} with a logarithmic pole
	$$\left\{
	\begin{array}{cc}
	(dd^c g_{\Omega, w})^n = \delta_w &\text{ on } \Omega\\
	g_{\Omega,w} = 0 &\text{ on } \partial \Omega\\
	g_{\Omega,w} = \log |\cdot - w| + O(1) & \text{ near }w.
	\end{array}\right.$$
	%These functions have been used in complex analysis in several variables and complex geometry.
	Due to the non-linearity of the Monge-Amp\`ere operator, analysis of pluricomplex Green functions is much more difficult than that of Green functions.
	
	To analyze infinitesimal behavior of pluricomplex Green function near its pole, it is useful to consider (the logarithm of) {\it the Azukawa pseudometric}. For a fixed point $w \in \Omega$ and a vector $X \in \mathbb{C}^n$, it is defined as
	$$A_{\Omega, w}(X):=\limsup_{\lambda \to 0}(g_{\Omega, w}(w + \lambda X) - \log |\lambda|).$$
	When $n=1$ and $X=1$, it measures the difference between the Green function and the logarithm:
	$$A_{\Omega, w}(1) = \limsup_{\lambda \to 0}(g_{\Omega,w}(w + \lambda) - \log |\lambda|).$$
	This value is called as the {\it Robin constant}.
	
	In this note, we are interested in the variational theory of such values.
	Some variational results have been obtained by many authors. In \cite{Yam}, Yamaguchi proved that Robin constants are subharmonic under pseudoconvex variations:
	
	\begin{theorem}[\cite{Yam}]\label{thm:Robin-psh}
		Let $t \in \Delta \subset \mathbb{C}$ and let $D(t)$ be an unramified coveing domain over $\mathbb{C}$ with boundary $\partial D(t)$. Set $\mathbb{D} := \{(t,z): t \in \Delta, z \in D(t)\}$ and assume that $\mathbb{D}$ is pseudoconvex. Fix $\zeta$ such that $\zeta \in D(t)$ for every $t \in \Delta$. Then, the Robin constant $\lambda(t) := \lim_{z \to \zeta} g(z) - \log |z - \zeta|$ is a subharmonic function of $t$.
	\end{theorem}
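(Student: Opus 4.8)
The plan is to prove subharmonicity of $\lambda$ by a Hadamard-type variational computation, in which pseudoconvexity of $\mathbb{D}$ is used at exactly one point: the sign of the Levi form of $\partial\mathbb{D}$ in the complex direction transverse to the fibres. Write $g(t,z):=g_{D(t),\zeta}(z)$, so that for each $t$ this is harmonic in $z$ on $D(t)\setminus\{\zeta\}$, vanishes on $\partial D(t)$, and $g(t,z)-\log|z-\zeta|$ extends harmonically in $z$ across $\zeta$, with value $\lambda(t)$ there. I would first reduce to a smooth nondegenerate situation: near a given $t_0$, exhaust $\mathbb{D}$ by an increasing sequence of relatively compact pseudoconvex subdomains $\mathbb{D}_\nu=\{\varphi_\nu<0\}$ with $\varphi_\nu$ smooth, strictly plurisubharmonic, and $d_z\varphi_\nu\neq0$ on each slice boundary. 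The fibres $D_\nu(t)$ increase to $D(t)$, so by monotone convergence of Green functions the corresponding Robin constants $\lambda_\nu$ decrease to $\lambda$; since a decreasing limit of subharmonic functions is subharmonic, it suffices to treat each smooth family $\mathbb{D}_\nu$. (This is the only place hyperconvexity enters, and the fact that the $D(t)$ are spread over $\mathbb{C}$ rather than planar does not affect the local computation.)

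So assume $\mathbb{D}=\{\varphi<0\}$ is smooth and nondegenerate. Trivialize a neighbourhood of $t_0$ by a smooth family of diffeomorphisms $\Phi_t$ carrying a fixed reference domain onto $D(t)$ and fixing $\zeta$; then $g$, its $t$-derivatives, and its boundary data become genuine functions on a fixed domain, smooth up to the boundary away from $\zeta$, and one may differentiate freely. By the classical Hadamard formula for the variation of the (symmetric) Green function under a boundary displacement, the first variation of $\lambda$ is a pure boundary integral,
\[
\frac{\partial\lambda}{\partial t}(t)=\frac1{2\pi}\int_{\partial D(t)}\Bigl(\frac{\partial g}{\partial n}\Bigr)^{2}\beta_t\,ds ,
\]
where $\beta_t$ is the normal component on $\partial D(t)$ of the velocity $\partial_t\Phi_t$, expressible through $\partial_t\varphi$ and $\partial_z\varphi$; there is no interior or pole contribution, which is exactly why $\lambda$ — and not $g$ itself — is the quantity one can hope to control.

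The crux is to apply $\partial_{\bar t}$ once more and reorganize. The terms in which the new derivative falls on $g$ or on $\partial g/\partial n$ combine, after an integration by parts over $D(t)$ using $\Delta_z g\equiv0$ off the pole, into a manifestly nonnegative Dirichlet-type energy $\int_{D(t)}|\partial_{\bar z}\dot g|^{2}(\cdots)$, where $\dot g=\partial_t g$. The terms in which both parameter derivatives hit the geometry of the family collapse into a boundary integral of the form
\[
\int_{\partial D(t)}L_{\varphi}\!\Bigl(1,-\tfrac{\varphi_t}{\varphi_z}\Bigr)\,w\,ds ,
\]
with $L_\varphi$ the Levi form of $\varphi$, the vector $(1,-\varphi_t/\varphi_z)$ the complex tangent direction to $\partial\mathbb{D}$ transverse to the fibre, and $w>0$ an explicit weight assembled from $\partial g/\partial n$ and $|\partial_z\varphi|$. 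Since $\mathbb{D}$ is pseudoconvex, $L_\varphi\geq0$ along complex tangent directions on $\partial\mathbb{D}$, so this term is $\geq0$. Adding the two pieces yields $\partial_t\partial_{\bar t}\lambda\geq0$; thus $\lambda$ is subharmonic for the smooth family, and the reduction above finishes the proof.

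The main obstacle is precisely this second-variation bookkeeping: differentiating cleanly through the moving boundary $\partial D(t)$, correctly tracking the vanishing pole contributions, and — above all — recognizing the resulting expression as ``Dirichlet energy plus weighted Levi form,'' which is the single step that uses pseudoconvexity. The reduction to smooth families and the passage to the limit are routine by comparison, the essential input there being monotone convergence of Green functions under exhaustion.
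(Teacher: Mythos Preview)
The paper does not give its own proof of this statement: Theorem~1.1 is quoted from \cite{Yam} as a known background result, with no argument supplied. There is therefore nothing in the paper to compare your proposal against.

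That said, your sketch is the classical Hadamard--Yamaguchi variational approach, and it is essentially the method of the cited reference \cite{Yam} (and its later refinements by Levenberg--Yamaguchi and Maitani--Yamaguchi). The architecture is right: reduce to smooth strictly pseudoconvex families by exhaustion, compute the first variation of $\lambda$ as a boundary integral, and then show that $\partial_t\partial_{\bar t}\lambda$ splits into a nonnegative Dirichlet energy over $D(t)$ plus a boundary integral weighted by the Levi form of the defining function in the fibre-transverse complex tangent direction, which is $\geq 0$ by pseudoconvexity. Your monotonicity step in the reduction is also correct: for $D_\nu(t)\uparrow D(t)$ the Green functions satisfy $g_\nu\geq g$ on $D_\nu(t)$, hence $\lambda_\nu\downarrow\lambda$, and a decreasing limit of subharmonic functions is subharmonic.

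What is missing in your write-up is only the honest bookkeeping you yourself flag: the precise second-variation identity (with the correct constants and the verification that all pole contributions cancel) is the substance of Yamaguchi's paper and is not something one can wave through in a sentence. If you intend this as a sketch pointing to \cite{Yam}, it is fine; if you intend it as a self-contained proof, the identity
\[
\frac{\partial^2\lambda}{\partial t\,\partial\bar t}=\text{(Dirichlet term)}+\int_{\partial D(t)} L_\varphi\Bigl(1,-\tfrac{\varphi_t}{\varphi_z}\Bigr)\,w\,ds
\]
needs to be derived, not asserted.
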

	In \cite{MY}, by a formula on the Robin constant and the Bergman kernel, the subharmonicity of Bergman kernels under pseudoconvex variations is proved.
	Their result was widely generalized in \cite{Ber2}. In \cite{Ber2}, the Nakano positivity of the Hilbert space bundle of $L^2$-holomorphic functions was proved.
	
	Our aim in this note is to consider the pluripotential version of Theorem \ref{thm:Robin-psh}. Our ultimate goal is to prove the following conjecture:
	
	\begin{conjecture}\label{conj:azukawa-psh}
		Let $\widetilde{\Omega} \subset \mathbb{C}^n \times \Delta$ be a pseudoconvex domain.
		For each $t \in \Delta$, define $\Omega_t$ by $\{z \in \mathbb{C}^n: (z,t) \in \widetilde{\Omega} \}$. 
		Fix a point $w \in \mathbb{C}^n$.
		We assume that $\Omega_t$ is hyperconvex for every $t \in \Delta$ and $w \in \Omega_t$.
		Then,
		\begin{itemize}
			\item[(1)] For $(X,t) \in \Delta \times \mathbb{C}^n$, the function 
			$$A_t(X) := A_{\Omega_t, w}(X)$$
			is plurisubharmonic with respect to $(X,t)$.
			\item[(2)] For $t \in \mathbb{C}$, we set $I_t := \{X: A_t(X) < 0\}$. We denote the volume of $I_t$ by $V(t)$. Then the function $t \mapsto -\log V(t)$ is subharmonic.
		\end{itemize}
	\end{conjecture}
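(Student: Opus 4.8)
The plan is to establish part (1) of Conjecture~\ref{conj:azukawa-psh} first and then deduce part (2) from it together with Berndtsson's positivity theorem. For (1) the goal is the plurisubharmonicity of $A_t(X)$ in $(X,t)$ on $\mathbb{C}^n\times\Delta$ (with the value $-\infty$ allowed on the pluripolar set $\{X=0\}$). When every $\Omega_t$ is balanced with centre $w$ this is immediate: a balanced pseudoconvex domain $\Omega$ with centre $w$ satisfies $g_{\Omega,w}(z)=\log h_{\Omega,w}(z-w)$ for the Minkowski functional $h_{\Omega,w}$ of $\Omega-w$, hence $A_{\Omega,w}(X)=\log h_{\Omega,w}(X)$; since $\widetilde\Omega=\{(z,t):h_{\Omega_t,w}(z-w)<1\}$ is pseudoconvex, $(z,t)\mapsto\log h_{\Omega_t,w}(z-w)$ is plurisubharmonic on $\widetilde\Omega$, and therefore so is $(X,t)\mapsto A_t(X)$. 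In general I would reduce (1) to the assertion that the total-space Green function $G(z,t):=g_{\Omega_t,w}(z)$ is plurisubharmonic on $\widetilde\Omega$. Granting that, $(\lambda,X,t)\mapsto G(w+\lambda X,t)-\log|\lambda|$ is plurisubharmonic on $\{\lambda\neq 0\}$ (a plurisubharmonic function composed with a holomorphic map, minus a pluriharmonic one), and the logarithmic-pole bound $G(w+\zeta,t)=\log|\zeta|+O(1)$ shows it is locally bounded above near the pluripolar hypersurface $\{\lambda=0\}$, so it extends plurisubharmonically across it; its trace on $\{\lambda=0\}$ is the upper regularization of $A_t(X)$, and one checks it already equals $A_t(X)$ using continuity of the Azukawa pseudometric of hyperconvex domains together with the continuity properties of $G$ away from the pole.

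The crux of (1) is thus the plurisubharmonicity of $G$ on $\widetilde\Omega$, the pluripotential analogue of the step behind Theorem~\ref{thm:Robin-psh}. I see two natural routes. First, an envelope argument: write $G(\cdot,t)=\sup\{u\in PSH^-(\Omega_t):u-\log|\cdot-w|\text{ bounded above near }w\}$, show that the upper regularization of $(z,t)\mapsto G(z,t)$ is plurisubharmonic on $\widetilde\Omega$ (a regularized supremum of a locally bounded-above family, exploiting pseudoconvexity of $\widetilde\Omega$ and $\log|z-w|$ as an admissible competitor), and then verify that every slice of this regularization is again a negative plurisubharmonic function on $\Omega_t$ with the correct logarithmic pole at $w$, whence it coincides with $G(\cdot,t)$. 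Second, a disc-formula argument: use Poletsky's representation of $g_{\Omega_t,w}$ as an infimum over analytic discs and the fact that pseudoconvexity of $\widetilde\Omega$ allows deforming discs in $\Omega_{t_0}$ to nearby fibres, as in disc-formula proofs of plurisubharmonic variation of invariant metrics. The delicate point in the first route is that slices of the regularization must keep (not worsen) the singularity at $w$ and the boundary behaviour; in the second, one needs quantitative control of the disc deformations near $\partial\widetilde\Omega$.

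For (2) I would assume (1). Then $\mathcal I:=\{(X,t)\in\mathbb{C}^n\times\Delta:A_t(X)<0\}$ is a sublevel set of a plurisubharmonic function, hence a pseudoconvex domain, and each fibre $I_t$ is a bounded balanced domain, namely the Azukawa indicatrix of the hyperconvex domain $\Omega_t$. For a bounded circled domain $\mathcal D\subset\mathbb{C}^n$ the homogeneous components of an $L^2$ holomorphic function are mutually orthogonal, so only the constant term contributes to the Bergman kernel at the centre, giving $K_{\mathcal D}(0,0)=1/\mathrm{vol}(\mathcal D)$. Thus $-\log V(t)=\log K_{I_t}(0,0)$. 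By Berndtsson's theorem on the plurisubharmonic (indeed Nakano-positive) variation of Bergman kernels over pseudoconvex families --- the higher-dimensional form of \cite{MY}, see \cite{Ber2} --- the function $(z,t)\mapsto\log K_{\mathcal I_t}(z,z)$ is plurisubharmonic on $\mathcal I$; restricting to the line $\{(0,t):t\in\Delta\}\subset\mathcal I$ (note $A_t(0)=-\infty<0$) gives that $t\mapsto\log K_{I_t}(0,0)=-\log V(t)$ is subharmonic. For balanced $\Omega_t$ one has $I_t=\Omega_t-w$, so $V(t)=\mathrm{vol}(\Omega_t)$ and the statement reduces to the subharmonicity of $t\mapsto\log K_{\Omega_t}(w,w)$, immediate from \cite{Ber2}.

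The main obstacle is the crux of (1) in the non-balanced case: the plurisubharmonic variation of solutions of the homogeneous complex Monge-Amp\`ere equation with a fixed logarithmic pole. The nonlinearity of the Monge-Amp\`ere operator blocks the direct use of the $L^2$ / Berndtsson machinery that makes both the balanced case and part (2) work, and both the envelope argument and the disc-formula argument demand new estimates near the pole and near $\partial\widetilde\Omega$. The non-balanced example given later in the paper is, I expect, designed to probe exactly this step.
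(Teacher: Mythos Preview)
The statement is labelled a \emph{conjecture} in the paper and is not proved there in full generality; the paper only establishes the implication (1)$\Rightarrow$(2) (Theorem~\ref{thm:volume-of-log-hom-psh}) and the case of balanced fibres with $w=0$ (Theorem~\ref{thm:balanced}). Your proposal correctly isolates the plurisubharmonic variation of $G(z,t)=g_{\Omega_t,w}(z)$ as the open core of (1) and does not claim to have settled it; on this point you and the paper agree, and the two routes you sketch (upper envelope, Poletsky disc formula) are indeed the natural candidates, with exactly the obstacles you name near the pole and near $\partial\widetilde\Omega$.

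Where your proposal genuinely diverges from the paper is in the deduction of (2) from (1). The paper's proof of Theorem~\ref{thm:volume-of-log-hom-psh} writes $V(t)$ as a spherical integral $C\int_{S^{2n-1}} e^{-2nA(\widehat X,t)}\,dS$ (Claim~\ref{clm:volume_integral}), observes that $\epsilon\int_{B^{2n}}e^{-(2n-\epsilon)A(X,t)}\,d\lambda(X)=\int_{S^{2n-1}}e^{-(2n-\epsilon)A(\widehat X,t)}\,dS$, applies Berndtsson's Pr\'ekopa--Kiselman theorem for $S^1$-invariant plurisubharmonic weights \cite[Theorem~1.3]{Ber1} to the ball integral, and lets $\epsilon\to 0$ monotonically. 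Your route instead notes that each $I_t$ is a bounded balanced domain, so that $K_{I_t}(0,0)=1/V(t)$, and then invokes the log-plurisubharmonic variation of fibrewise Bergman kernels over the pseudoconvex total space $\mathcal I=\{A<0\}$ from \cite{Ber2}. This is a correct and arguably cleaner alternative: it trades the explicit $\epsilon$-regularization and monotone limit for a single appeal to the heavier curvature machinery of \cite{Ber2}, and it makes the link to \cite{MY} transparent. For the balanced case of (1), your argument is essentially the content of the paper's Section~3 compressed into the single assertion that pseudoconvexity of a fibred domain with balanced fibres forces $(z,t)\mapsto\log h_t(z)$ to be plurisubharmonic; the paper spells this step out via the distance-to-boundary criterion \cite[Theorem~2.6.7]{Hor} combined with a fibrewise linear coordinate change, which is precisely the justification your one-line claim needs.
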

	
	Note that the following theorem shows that (1) implies (2) in Conjecture \ref{conj:azukawa-psh}, thus (2) is a weaker statement than (1).
	\begin{theorem}\label{thm:volume-of-log-hom-psh}
		Let $A(X,t)$ be a plurisubharmonic function on $\mathbb{C}^n \times \Delta$ such that $A(\lambda X, t) = A(X,t) + \log |\lambda|$. Set $I_t := \{X\in \mathbb{C}^n : A(X, t) < 0 \}$ for each $t$ and let $V(t)$ be the volume of $I_t$.
		Then the function $t \mapsto -\log V(t)$
		is subharmonic.
	\end{theorem}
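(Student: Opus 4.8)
The plan is to recognise $V(t)^{-1}$ as the diagonal value at the origin of a Bergman kernel varying in a pseudoconvex family, and then to quote the theorem on subharmonic variation of Bergman kernels. To set things up, put $\mathcal{I}:=\{(X,t)\in\mathbb{C}^n\times\Delta:A(X,t)<0\}$. Since $A$ is plurisubharmonic and $\mathbb{C}^n\times\Delta$ is pseudoconvex, the sublevel set $\mathcal{I}$ is itself pseudoconvex. The logarithmic homogeneity has two consequences for the fibre $I_t=\{X:A(X,t)<0\}$: it is \emph{balanced}, because $A(X,t)<0$ and $|\lambda|\le 1$ force $A(\lambda X,t)=A(X,t)+\log|\lambda|\le A(X,t)<0$; and $0\in I_t$, because the same relation forces $A(0,t)=-\infty$. (If $V(t)=+\infty$ for some $t$ --- not excluded by the hypotheses --- one reads $-\log V(t)=-\infty$ and interprets subharmonicity with values in $[-\infty,+\infty)$; everything below still applies.)

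Next I would establish the elementary identity $K_D(0,0)=\mathrm{vol}(D)^{-1}$ for every balanced open set $D\subseteq\mathbb{C}^n$ containing $0$, where $K_D$ is the Bergman kernel and $1/\infty:=0$. Averaging over the circle action $X\mapsto e^{i\theta}X$ preserves $D$ and Lebesgue measure, hence defines an orthogonal projection on $L^2(D)$; applied to a holomorphic $f\in A^2(D)$ it yields, via the mean value property along the discs $\{\lambda X:|\lambda|\le 1\}\subseteq D$, the constant function $f(0)$. Therefore $|f(0)|^2\,\mathrm{vol}(D)\le\|f\|_{L^2(D)}^2$, so $K_D(0,0)\le\mathrm{vol}(D)^{-1}$, with equality attained by the constant function when $\mathrm{vol}(D)<\infty$. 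In particular $-\log V(t)=\log K_{I_t}(0,0)$ for every $t$.

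It then remains to apply the theorem on subharmonic variation of Bergman kernels over pseudoconvex families --- proved by Maitani--Yamaguchi \cite{MY} for planar fibres and, in the generality needed here, a consequence of the Griffiths (in fact Nakano) positivity established by Berndtsson \cite{Ber2}: because $\mathcal{I}$ is pseudoconvex and the origin lies in every fibre, $t\mapsto\log K_{\mathcal{I}_t}(0,0)$ is subharmonic on $\Delta$. Combined with the identity of the previous paragraph, this is exactly the assertion that $t\mapsto-\log V(t)$ is subharmonic.

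The substance of the argument sits entirely in the cited Bergman-kernel variation theorem; the remaining steps are soft. The identity $K_D(0,0)=\mathrm{vol}(D)^{-1}$ for balanced $D$ uses only the mean value property, which makes the circle average of $f\in A^2(D)$ the constant $f(0)$, together with the fact that circle-averaging contracts $L^2(D)$; the only cautions are to keep the convention $1/\infty=0$ so that unbounded fibres are harmless, and to invoke the variation theorem in its correct form --- subharmonicity, valued in $[-\infty,+\infty)$, of the diagonal Bergman kernel at a single fixed interior point of an arbitrary pseudoconvex total space, with the fibres $I_t$ assumed neither bounded nor smoothly bounded.
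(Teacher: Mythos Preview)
Your argument is correct, and it follows a genuinely different route from the paper's. The paper expresses $V(t)$ explicitly as a spherical integral $V(t)=C\int_{S^{2n-1}}e^{-2nA(\widehat X,t)}\,dS(\widehat X)$, then for $\epsilon>0$ rewrites $\frac{1}{\epsilon}\int_{S^{2n-1}}e^{-(2n-\epsilon)A(\widehat X,t)}\,dS$ as a ball integral $\int_{B^{2n}}e^{-(2n-\epsilon)A(X,t)}\,d\lambda$ of an $S^1$-invariant function, applies Berndtsson's Pr\'ekopa--Kiselman result \cite{Ber1} to get subharmonicity of $-\log$ of that integral, and finally lets $\epsilon\to 0$ so that $-\log V(t)$ is a decreasing limit of subharmonic functions. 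Your proof instead identifies $V(t)^{-1}$ with the fibrewise Bergman kernel $K_{I_t}(0,0)$ via the balanced-domain identity and invokes the Bergman-kernel variation theorem of \cite{MY}, \cite{Ber2}. Your route is shorter and avoids the $\epsilon$-regularisation, and it ties the statement directly to the Bergman-kernel theme already present in the paper's introduction; on the other hand it leans on the heavier machinery of \cite{Ber2} rather than the comparatively light \cite{Ber1}, and the paper's explicit spherical formula for $V(t)$ is of some independent interest. One small point worth making explicit in your write-up: the subharmonic variation of $\log K_{D_t}(0,0)$ for a pseudoconvex total space with possibly unbounded fibres follows from the bounded case by exhausting $\mathcal I$ by bounded pseudoconvex subdomains and using that the diagonal Bergman kernel decreases under domain enlargement, so the limit is a decreasing limit of subharmonic functions.
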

	
	The {\it Azukawa indicatrix} is a set defined by $I_{\Omega,w} := \{X \in \mathbb{C}^n: A_{\Omega,w} (X) < 0 \}$.
	In Conjecture \ref{conj:azukawa-psh} (2), we consider the volume of the Azukawa indicatrix.
	The volume of the Azukawa indicatrix has been studied in relation of generalization of the Suita conjecture, for example, \cite{BZ1}. Very recently, using the Carath\'eodory-Reiffen pseudometric, B{\l}ocki and Zwonek proved the ``diagonal case'' of Conjecture \ref{conj:azukawa-psh} \cite[Proposition 13 and Theorem 14]{BZ2}, i.e. for a pseudoconvex domain $\Omega \subset \mathbb{C}^n$, the functions $\Omega \times \mathbb{C}^n \ni (z, X) \mapsto A_{\Omega, z}(X)$ and $\Omega \ni z \mapsto -\log{\rm Vol}(\{X: A_{\Omega, z}(X)<0\})$ are plurisubharmonic. 
	
	We do not know if Conjecture \ref{conj:azukawa-psh} is true in general, but we have a result for domains with some symmetry.
	A domain $\Omega \subset \mathbb{C}^n$ is said to be {\it balanced} if $\lambda \Omega \subset \Omega$ for every $\lambda \in \mathbb{C}$ with $|\lambda|\leq 1$.
	In this note, we prove that Conjecture \ref{conj:azukawa-psh} is true if every $\Omega_t$ is balanced.
	\begin{theorem}\label{thm:balanced}
		Conjecture 1.2 is true when every $\Omega_t$ is balanced and $w = 0$.
	\end{theorem}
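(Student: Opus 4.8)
The plan is to exploit the balanced symmetry twice: first to make the Azukawa pseudometric completely explicit, and then to recognize the resulting function as the ``radius function'' of a Hartogs domain built out of $\widetilde\Omega$.

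First I would recall the classical fact that for a bounded balanced pseudoconvex domain $\Omega\subset\mathbb C^n$ with $0\in\Omega$ one has $g_{\Omega,0}=\log h_\Omega$, where $h_\Omega(z)=\inf\{s>0:z/s\in\Omega\}$ is the Minkowski functional (see Klimek's or Jarnicki--Pflug's book: the inequality $\le$ holds because $\log h_\Omega$ is a negative plurisubharmonic competitor with the correct logarithmic pole at $0$, and $\ge$ follows from the maximum principle applied along complex lines through $0$). Since $h_\Omega$ is positively homogeneous, $\log h_\Omega(\lambda X)=\log|\lambda|+\log h_\Omega(X)$, so the $\limsup$ defining the Azukawa pseudometric is an honest limit and
$$A_{\Omega,0}(X)=\log h_\Omega(X),\qquad I_{\Omega,0}=\{h_\Omega<1\}=\Omega .$$
Applying this to the fibres $\Omega_t=\{z:(z,t)\in\widetilde\Omega\}$, each of which is a bounded (this is where the hyperconvexity hypothesis enters) balanced pseudoconvex domain containing $0$, we get $A_t(X)=\log h_{\Omega_t}(X)=:\varphi(X,t)$ and $I_t=\Omega_t$, $V(t)={\rm Vol}(\Omega_t)$. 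Since $\varphi(\lambda X,t)=\log|\lambda|+\varphi(X,t)$, Theorem~\ref{thm:volume-of-log-hom-psh} reduces part~(2) of Conjecture~\ref{conj:azukawa-psh} to part~(1). Thus the whole theorem reduces to showing that $\varphi$ is plurisubharmonic on $\mathbb C^n\times\Delta$.

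To prove that, I would first note that $\varphi$ is upper semicontinuous: since $h_{\Omega_t}(X)=\inf\{s>0:(X/s,t)\in\widetilde\Omega\}$ and $\widetilde\Omega$ is open, the set $\{(X,t):h_{\Omega_t}(X)<c\}$ is open for every $c$. Then consider the holomorphic map
$$F:\mathbb C^n\times\Delta\times\mathbb C\longrightarrow\mathbb C^n\times\Delta,\qquad F(X,t,w)=(wX,t),$$
whose source is pseudoconvex, so that $D:=F^{-1}(\widetilde\Omega)$ is pseudoconvex (the preimage of a pseudoconvex open set under a holomorphic map out of a pseudoconvex domain is pseudoconvex: $\max(-\log{\rm dist}(\cdot,\partial\widetilde\Omega)\circ F,\ \chi)$, with $\chi$ a plurisubharmonic exhaustion of the source, is one for $D$). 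By homogeneity of $h_{\Omega_t}$,
$$(X,t,w)\in D\iff h_{\Omega_t}(wX)<1\iff |w|\,e^{\varphi(X,t)}<1 ,$$
and since $0\in\Omega_t$ every fibre of $D$ over $\mathbb C^n\times\Delta$ is a non-empty disc centred at the origin; hence $D$ is a complete Hartogs domain over the pseudoconvex base $\mathbb C^n\times\Delta$ with radius function $e^{-\varphi}$. By the standard criterion (a complete Hartogs domain over a pseudoconvex base is pseudoconvex if and only if the negative logarithm of its radius function is plurisubharmonic), together with the upper semicontinuity of $\varphi$ established above, we conclude $\varphi\in PSH(\mathbb C^n\times\Delta)$. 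This gives (1), and with Theorem~\ref{thm:volume-of-log-hom-psh} also (2).

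Most of this is routine; the essential use of the balanced hypothesis is concentrated in the identity $A_{\Omega_t,0}=\log h_{\Omega_t}$ and, through the same homogeneity, in the fact that $\varphi$ appears as the radius function of the Hartogs domain $F^{-1}(\widetilde\Omega)$. The point requiring the most care is the first reduction: one must verify that each fibre $\Omega_t$ really is a bounded, balanced, pseudoconvex domain containing $0$ so that the explicit formula for $g_{\Omega_t,0}$ applies; once that is in place the plurisubharmonicity of $\varphi$ is a direct consequence of well-known pseudoconvexity facts, and no non-linear Monge--Amp\`ere analysis is needed.
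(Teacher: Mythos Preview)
Your proof is correct, and it takes a genuinely different route from the paper's. Both arguments begin identically, reducing everything to the plurisubharmonicity of $\varphi(X,t)=\log h_{\Omega_t}(X)$ via the explicit formula $A_{\Omega_t,0}=\log h_{\Omega_t}$ for balanced domains. From there the paper checks the mean-value inequality on every complex line through a given point: after normalizing to $X=(1,0,\dots,0)$, $t=0$, it treats the purely $X$-tangential direction via known plurisubharmonicity of the Azukawa metric for a fixed domain, the pure $t$-direction via H\"ormander's theorem that $-\log\mathrm{dist}_{z_1}(\cdot,\partial\widetilde\Omega)$ is plurisubharmonic, and the mixed case by a $t$-dependent linear change of the $z$-coordinates that maps balanced domains to balanced domains and reduces to the pure $t$ case. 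Your argument instead packages the homogeneity into the single holomorphic map $F(X,t,w)=(wX,t)$, observes that $F^{-1}(\widetilde\Omega)$ is a pseudoconvex complete Hartogs domain over $\mathbb C^n\times\Delta$ with radius $e^{-\varphi}$, and invokes the Hartogs criterion once. Your approach is shorter, avoids the three-case split, and yields joint plurisubharmonicity in one stroke; the paper's approach is more hands-on and makes the link to the directional boundary distance explicit, at the cost of the coordinate-change trick in the mixed case.
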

	On a pseudoconvex balanced domain, one can write the pluricomplex Green function on it with a pole at the origin explicitly. We will use this explicit formula to prove Theorem \ref{thm:balanced}.
	
	We also provide an example consisting of domains that are not balanced with respect to their poles.
	\begin{theorem}\label{thm:example}
		Let $\varphi(t)$ be a subharmonic function on $\Delta$ with $\varphi < 0$. Define $\Omega_t := \{z \in \mathbb{C}^2 : |z|^2 < e^{-\varphi(t)}\}$. Let $w = (1,0) \in \mathbb{C}^2$. Then the function
		$$\mathbb{C}^2 \times \Delta \ni (X, t) \mapsto A_{\Omega_t, w}(X)$$
		is plurisubharmonic with respect to $(X, t)$.
	\end{theorem}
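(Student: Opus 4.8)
The plan is to make $A_{\Omega_t,w}$ completely explicit, exploiting that each $\Omega_t$ is a ball and that the automorphism group of the ball is transitive. Since $\varphi<0$, the radius $r(t):=e^{-\varphi(t)/2}$ exceeds $1$, so $\Omega_t=B(0,r(t))$ is hyperconvex and contains $w=(1,0)$. The dilation $z\mapsto z/r(t)$ is a biholomorphism of $\Omega_t$ onto the unit ball $\mathbb B^2$ carrying $w$ to $a(t):=w/r(t)=(e^{\varphi(t)/2},0)$. Combining the biholomorphic invariance of the pluricomplex Green function with the case of a central pole, $g_{\mathbb B^2,0}(\zeta)=\log|\zeta|$, and with the classical formula $g_{\mathbb B^2,b}(\zeta)=\log|\Psi_b(\zeta)|$, where $\Psi_b\in\operatorname{Aut}(\mathbb B^2)$ is the standard involution with $\Psi_b(b)=0$, we get
$$g_{\Omega_t,w}(z)=\log\bigl|\Psi_{a(t)}(z/r(t))\bigr|.$$
First I would substitute this into the definition of the Azukawa pseudometric: writing $(w+\lambda X)/r(t)=a(t)+\lambda X/r(t)$, expanding $\Psi_{a(t)}$ to first order at $a(t)$ and letting $\lambda\to0$ yields
$$A_{\Omega_t,w}(X)=\tfrac12\varphi(t)+\log\bigl|D\Psi_{a(t)}(a(t))\,X\bigr|.$$

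Next I would identify the linear map $D\Psi_{a(t)}(a(t))$. From $\Psi_b(z)=\bigl(b-P_bz-\sqrt{1-|b|^2}\,Q_bz\bigr)/(1-\langle z,b\rangle)$, where $P_b$ is the orthogonal projection onto $\mathbb C b$ and $Q_b=I-P_b$, a direct differentiation (or the relation $D\Psi_b(b)=(D\Psi_b(0))^{-1}$ coming from $\Psi_b\circ\Psi_b=\operatorname{id}$) gives $D\Psi_b(b)=-\tfrac{1}{1-|b|^2}P_b-\tfrac{1}{\sqrt{1-|b|^2}}Q_b$. The decisive simplification is that $a(t)=(e^{\varphi(t)/2},0)$ lies on the first coordinate axis, so $P_{a(t)},Q_{a(t)}$ are the coordinate projections and $D\Psi_{a(t)}(a(t))$ is diagonal; using $|a(t)|^2=e^{\varphi(t)}$ this produces
$$A_{\Omega_t,w}(X)=\frac{\varphi(t)}{2}+\frac12\log\!\left(\frac{|X_1|^2}{(1-e^{\varphi(t)})^2}+\frac{|X_2|^2}{1-e^{\varphi(t)}}\right).$$

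Finally, to deduce plurisubharmonicity, since $0<e^{\varphi(t)}<1$ I would expand $\tfrac1{1-e^{\varphi}}=\sum_{k\ge0}e^{k\varphi}$ and $\tfrac1{(1-e^{\varphi})^2}=\sum_{k\ge0}(k+1)e^{k\varphi}$ and absorb the term $\tfrac{\varphi}{2}$, rewriting the formula as
$$A_{\Omega_t,w}(X)=\frac12\log\!\left(\sum_{k=1}^{\infty}e^{k\varphi(t)}\bigl(k|X_1|^2+|X_2|^2\bigr)\right).$$
For each $k\ge1$ the function $u_k(X,t):=k\varphi(t)+\log\bigl(k|X_1|^2+|X_2|^2\bigr)$ is plurisubharmonic on $\mathbb C^2\times\Delta$: the first summand is subharmonic in $t$, hence plurisubharmonic in $(X,t)$, and the second equals $2\log|(\sqrt k\,X_1,X_2)|$, plurisubharmonic in $X$. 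Since $(y_1,\dots,y_N)\mapsto\log(e^{y_1}+\dots+e^{y_N})$ is convex and nondecreasing in each variable, each partial sum $\tfrac12\log\sum_{k=1}^N e^{u_k}$ is plurisubharmonic; and on any compact subset of $\mathbb C^2\times\Delta$ one has $\varphi\le-\varepsilon<0$, so the series converges locally uniformly. Thus $A_{\Omega_t,w}$ is a locally uniform limit of plurisubharmonic functions, hence plurisubharmonic, as claimed. Conceptually the argument is routine once the explicit Green function of the ball is invoked; I expect the only places requiring care to be the verification of the derivative formula for $\Psi_{a(t)}$ and the bookkeeping in the series rearrangement.
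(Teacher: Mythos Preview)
Your proof is correct and follows the same overall strategy as the paper: both compute the Azukawa pseudometric explicitly via the M\"obius automorphism $\Psi_{a(t)}$ of the unit ball, arriving at the formula
\[
A_{\Omega_t,w}(X)=\tfrac{\varphi}{2}+\tfrac12\log\Bigl(\tfrac{|X_1|^2}{(1-e^{\varphi})^{2}}+\tfrac{|X_2|^2}{1-e^{\varphi}}\Bigr),
\]
and both then use the convexity of $\log\sum e^{y_j}$ to conclude plurisubharmonicity. The only difference is in how this last step is organized: the paper factors the expression as $\tfrac12\log\bigl(\tfrac{1}{1-e^{\varphi}}|X_1|^2+|X_2|^2\bigr)+\tfrac12\log\tfrac{e^{\varphi}}{1-e^{\varphi}}$ and observes that $x\mapsto\log\tfrac{1}{1-e^{x}}$ and $x\mapsto\log\tfrac{e^{x}}{1-e^{x}}$ are increasing and convex, thereby using the two-term lemma $\log(e^{P}+e^{Q})$ directly; you instead expand $\tfrac{e^{\varphi}}{(1-e^{\varphi})^2}$ and $\tfrac{e^{\varphi}}{1-e^{\varphi}}$ as geometric series and pass to a locally uniform limit of the finite log-sum-exp's. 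The paper's route is a touch shorter since it avoids the series-convergence bookkeeping, while yours makes the role of $\varphi<0$ very transparent (it is exactly what forces $e^{\varphi}<1$ so that the geometric series converge and $\sup_K\varphi<0$ by upper semicontinuity). Either way the argument is the same in spirit.
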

	
	We refer to an interesting topic on the Azukawa indicatrix, that is, a relation to the {\it Ohsawa-Takegoshi $L^2$ extension theorem}.
	In \cite{BL}, a Robin-type constant was used as an optimal coefficient of the $L^2$-extension theorem.
	We prove that we can use the Euclidean volume of the Azukawa indicatrix instead of a Robin-type constant. This change improves the estimate. Roughly speaking, it is because the Azukawa indicatrix has more information than the Robin-type constant has.
	This topic may have some relation with the higher dimensional Suita conjecture \cite{BZ1}, \cite{BZ2}.
	
	\section{Variation of sublevel sets of a log-homogeneous plurisubharmonic function}\label{sec:volume}
	In this section, we will prove Theorem \ref{thm:volume-of-log-hom-psh}.
	
	\begin{proof}[Proof of Theorem \ref{thm:volume-of-log-hom-psh}]
		By subtracting a sufficiently large constant from $A$, we can assume that the unit ball $B^{2n}$ in $\mathbb{C}^n$ is relatively compact in each $I_t$. First we represent $V(t)$ as an integration.
		\begin{claim}\label{clm:volume_integral}
			It holds that
			$$V(t) = C \int_{S^{2n-1}} e^{-2nA(\widehat{X}, t)} dS(\widehat{X}),$$
			where $S^{2n-1}$ is the unit sphere in $\mathbb{C}^n$, $\widehat{X}$ is a unit vector in $\mathbb{C}^n$, $dS$ is the volume form on $S^{2n-1}$ induced by the standard metric on $\mathbb{C}^n$ and $C$ is a constant which depends only on the dimension $n$.
		\end{claim}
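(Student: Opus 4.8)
The plan is to compute $V(t)$ by passing to polar (spherical) coordinates in $\mathbb{C}^n \simeq \mathbb{R}^{2n}$ and using the logarithmic homogeneity of $A$ to make the radial integration elementary. Write a nonzero vector as $X = r\widehat{X}$ with $r = |X| > 0$ and $\widehat{X} = X/|X| \in S^{2n-1}$. Applying the relation $A(\lambda X, t) = A(X,t) + \log|\lambda|$ with $\lambda = r$ gives $A(r\widehat{X}, t) = A(\widehat{X}, t) + \log r$, so the inequality $A(X,t) < 0$ cutting out $I_t$ is equivalent to $r < e^{-A(\widehat{X}, t)}$. Hence, discarding the origin,
$$I_t \setminus \{0\} = \bigl\{\, r\widehat{X} : \widehat{X} \in S^{2n-1},\ 0 < r < e^{-A(\widehat{X}, t)} \,\bigr\}.$$

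Next I would check that this representation is measure-theoretically sound and then integrate. Since $A$ is plurisubharmonic it is upper semicontinuous, so $\widehat{X} \mapsto A(\widehat{X}, t)$ is Borel measurable on $S^{2n-1}$ (and $I_t$ is open, hence measurable); the reduction already carried out, $\overline{B^{2n}} \subset I_t$, forces $A(\widehat{X}, t) < 0$ for every $\widehat{X} \in S^{2n-1}$. Using the standard decomposition $dV(X) = r^{2n-1}\, dr\, dS(\widehat{X})$ and integrating first in $r$ --- legitimate by Tonelli's theorem, the integrand being nonnegative, so the resulting identity is valid in $[0, +\infty]$ even if some radial fibre of $I_t$ is unbounded --- one obtains
$$V(t) = \int_{S^{2n-1}} \left( \int_0^{e^{-A(\widehat{X}, t)}} r^{2n-1}\, dr \right) dS(\widehat{X}) = \frac{1}{2n} \int_{S^{2n-1}} e^{-2nA(\widehat{X}, t)}\, dS(\widehat{X}),$$
which is precisely the claimed formula, with $C = \frac{1}{2n}$.

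I do not expect any genuine obstacle here: the argument is just Tonelli's theorem combined with the homogeneity of $A$, and the only points deserving a remark are the Borel measurability of the spherical slice of $A$ (from upper semicontinuity) and the use of Tonelli rather than Fubini, which keeps the equality meaningful even in the degenerate case $V(t) = +\infty$. In the situation this claim will be applied to --- the Azukawa pseudometric $A_{\Omega, w}$ of a bounded domain --- one has $A_{\Omega, w}(\widehat{X}) > -\infty$ in every direction, so $V(t)$ is finite and all the integrals above converge in the ordinary sense.
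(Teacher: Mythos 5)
Your argument is correct and is essentially the paper's own proof: the paper also rewrites $I_t$ as $\{r\widehat{X} : r < e^{-A(\widehat{X},t)}\}$ via the log-homogeneity and then integrates radially (phrased there as the volume of an ``infinitesimal cone'' being proportional to $e^{-2nA(\widehat{X},t)}$). Your version merely adds the explicit constant $C = \frac{1}{2n}$ and the measurability/Tonelli remarks, which are harmless refinements of the same computation.
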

		Fix a parameter $t \in \Delta$.
		For each $X \in \mathbb{C}^n \setminus \{0\}$, we denote the unit vector with the same direction as $X$ by $\widehat{X}$, i.e. $\widehat{X} = X / |X|$. Set $|X| =: r$.
		Then we have that
		$$A(X, t) < 0 \Longleftrightarrow A(r\widehat{X},t)<0 \Longleftrightarrow \log r < -A(\widehat{X},t).$$
		Thus it holds that
		$$I_t = \{X\in \mathbb{C}^n: r < e^{-A(\widehat{X}, t)}\}.$$
		We need to calculate the volume of this set. 
		The volume of infinitesimal cone in the $\widehat{X}$-direction is proportional to $e^{-2nA(\widehat{X}, t)}$, and thus $V(t)$ is proportional to $\int_{S^{2n-1}}e^{-2nA(\widehat{X}, t)}$. Claim \ref{clm:volume_integral} is proved.
		
		Next, we consider the following integral:
		$$\int_{B^{2n}} e^{-(2n-\epsilon) A(X,t)} d\lambda(X).$$
		Using a formula $A(X,t) = A(r\widehat{X},t) = A(\widehat{X},t)+ \log r$, we can calculate as
		\begin{align*}
		&\int_{S^{2n-1}} \int_0^1 e^{-(2n-\epsilon) A(\widehat{X}, t) - (2n-\epsilon)\log r} r^{2n-1}dr dS(\widehat{X})\\
	    %& \int_{S^{2n-1}} e^{-(2n-\epsilon) A(\widehat{X}, t) }\left(\int_0^1 r^{-2n+\epsilon} r^{2n-1}dr \right) dS(\widehat{X})\\
		&= \int_{S^{2n-1}} e^{-(2n-\epsilon) A(\widehat{X}, t) }\left(\int_0^1 r^{-1 +\epsilon} dr \right) dS(\widehat{X}).
		\end{align*}
		Since $\int_0^1 r^{-1 +\epsilon} dr = \frac{1}{\epsilon}$, we have that
		$$ \int_{B^{2n}} e^{-(2n-\epsilon) A(X,t)} d\lambda(X) = \frac{1}{\epsilon}\int_{S^{2n-1}} e^{-(2n-\epsilon) A(\widehat{X}, t) }dS(\widehat{X}).$$
		The left-hand side is an integration of an $S^1$-invariant function (i.e.\ $A(e^{i \theta}X, t) = A(X, t)$).
		By \cite[Theorem 1.3, 1.]{Ber1},
		$$t \mapsto -\log\int_{S^{2n-1}} e^{-(2n-\epsilon) A(\widehat{X}, t) }dS(\widehat{X}) $$
		is subharmonic.
		
		Then let $\epsilon \to 0$.
		Since we assumed that $B^{2n} \Subset I_t$,  $A(\widehat{X}, t)$ is negative for all $\widetilde{X} \in S^{2n-1}$ and all $t$, and thus $e^{\epsilon A(\widehat{X}, t)}$ is increasing when $\epsilon \to 0$.
		Therefore $V(t)$ is a decreasing limit of subharmonic functions, and thus $V(t)$ is also subharmonic.
	\end{proof}
	
	\section{Balanced domains}
	In this section, we verify Conjecture \ref{conj:azukawa-psh} for balanced domains. Let $\Omega$ be a bounded pseudoconvex balanced domain and $h$ be the homogeneous plurisubharmonic function (i.e.\ $h(\lambda z) = |\lambda| h(z)$ for every $\lambda \in \mathbb{C}$) on $\mathbb{C}^n$ such that $\Omega = \{z \in \mathbb{C}^n: h(z)<1 \}$. Then we can describe the pluricomplex Green function and the Azukawa pseudometric for $(\Omega, 0)$ in terms of $h$ (cf.\ \cite{Zwo}):
	$$g_{\Omega,0} = \log h(z),\, A_{\Omega,0} = \log h(X).$$
	
	Therefore, to prove Theorem \ref{thm:balanced}, we shall only prove that $\log h_{t}(X)$ is plurisubharmonic in $(X,t)$.
	To prove that, we fix $(X,t)$ and prove subharmonicity on every complex line through $(X,t)$. Since $\log h_t(0) = -\infty$, the mean valur inequality is trivially satisfied when $X=0$. Thus we can assume $t=0$ and $X=(1,0,\ldots, 0)$ without loss of generality.
	
	Let $(\xi, \tau) \in \mathbb{C}^n \times \mathbb{C}$ be a vector representing the complex line. We shall show the subharmonicity of $\lambda \mapsto \log h_{\lambda \tau} ((1,0,\ldots, 0) + \lambda\xi)$. We divide the situation into three cases.
	
	{\it Case 1: $\tau = 0$.} In this case, the desired subharmonicity directly follows from the plurisubharmonicity of the Azukawa pseudometric. It is known to be plurisubharmonic for a fixed domain (cf. \cite{Zwo}).
	
	{\it Case 2: $\xi = 0$.} We shall show that
	$$t \mapsto \log h_t(1,0,\ldots, 0)$$
	is subharmonic.
	By \cite[Theorem 2.6.7]{Hor}, 
	$$-\log \mathrm{dist}_{z_1}((t,0), \partial \widetilde{\Omega}) = -\log \mathrm{dist}_{z_1}(0, \partial \Omega_t) $$
	is subharmonic with respect to $t$.
	Here $\mathrm{dist}_{z_1}$ means the distance in $z_1$-direction, i.e.\
	$$\mathrm{dist}_{z_1}(p, \partial \Omega) = \sup \{t>0: p + (\lambda, 0,\ldots,0) \in \Omega \text{ for all }|\lambda| < t \}. $$
	We also have that $h_t(1,0,\ldots,0) = -1 / \mathrm{dist}_{z_1}(0, \partial \Omega_t)$ and thus
	$$\log h_t(1,0, \ldots, 0) = - \log \mathrm{dist}_{z_1}(0, \partial \Omega_t).$$
	It is subharmonic.
	
	{\it Case 3: General cases.}
	We shall show that
	\begin{equation}
	t \mapsto \log h_t(1+t\xi_1,t\xi_2, \ldots, t\xi_n)\label{eqn:general-case}
	\end{equation}
	is subharmonic.
	Consider a function
	\begin{equation}
	(t, z_1, \ldots, z_n) \mapsto \log h_t(z_1 + tz_1\xi_1, z_2 + tz_1\xi_2, z_3 + tz_1\xi_3 , \ldots, z_n + tz_1 \xi_n).\label{eqn:general-case-2}
	\end{equation}
	Then the function (\ref{eqn:general-case}) is obtained from (\ref{eqn:general-case-2}) by substitution $(z_1, \ldots,z_n) = (1,0, \ldots,0)$. The function (\ref{eqn:general-case-2}) is a composition of a mapping
	$$\varphi_t(z_1, \ldots, z_n)  = \begin{pmatrix}
	z_1 \\
	z_2 \\
	\vdots\\
	z_n
	\end{pmatrix}+ t\begin{pmatrix}
	\xi_1 & 0 & \ldots & 0\\
	\xi_2 & 0 & \ldots & 0\\
	\vdots & \vdots & \vdots & \vdots\\
	\xi_n & 0 & \ldots & 0\\
	\end{pmatrix} \begin{pmatrix}
	z_1 \\
	z_2 \\
	\vdots\\
	z_n
	\end{pmatrix} $$
	and $\log h_t$.
	The mapping $\varphi_t$ is bihomolorphic with respect to $(t, z_1, \ldots, z_n)$ if $t\ll 0$ and linear isomorphism in $(z_1, \ldots, z_n)$ for each fixed $t$.
	Specifically, it maps a balanced domain $\Omega_t$ to another balanced domain. Thus, considering the coordinate change under $\varphi$, we can reduce the situation to Case 2.
	
	\section{An example}
	We shall prove Theorem \ref{thm:example} in this section. 
	
	We can compute the pluricomplex Green functions on a ball using M\"obius transformations \cite{Kli}:
\begin{equation}
g_{B_{0,R},w}(z) = \log |T_{w/R} (z/R)|. \label{eqn:pluricomplex}
\end{equation}
	Here, $T_{a}$ is a biholomorphic mapping on the unit ball in $\mathbb{C}^n$ defined by
	$$T_a(z) := \frac{a - P_a z -\sqrt{1-|a|^2} Q_a z}{a - \langle z, a \rangle},$$
	where $P_a$ denotes the orthogonal projection to the subspace $\mathbb{C} \cdot a \subset\mathbb{C}^n$ and $Q_a$ denotes the orthogonal projection to its orthogonal complement: $P_a + Q_a = \mathrm{Id}_{\mathbb{C}^n}$.
	
	From here we assume that $n=2$ and $a = (r,0)$ for $0<r < 1$. Then $T_a$ is written as follows:
	$$\twodimvector{z_1}{z_2} \mapsto \frac{1}{1-rz_1} \twodimvector{r-z_1}{\sqrt{1-r^2} z_2}.$$
	By the equation \ref{eqn:pluricomplex}, we have that
	$$g_{\mathbb{B}, (r,0)} = \frac{1}{2} \log \left(\frac{|r-z_1|^2 + (1-r^2)|z_2|^2}{|1-rz_1|^2}\right). $$
	The Azukawa pseudometrics is
	$$A_{\mathbb{B}, (r,0)} (X) = \lim_{\lambda \to 0} \left[g_{\mathbb{B}, (r,0)} \left( \twodimvector{r}{0} + \lambda \twodimvector{X_1}{X_2} \right) - \log |\lambda| \right] = \frac{1}{2} \log \frac{|X_1|^2 + (1-r^2) |X_2|^2}{(1-r^2)^2}.$$
	
	We note that $A_{\rho \Omega} (X) = A_\Omega (X) - \log \rho$ for $\rho > 0$.
	Using these formulas, if $\Omega_t = \{z \in \mathbb{C}^2: |z| < e^{-\varphi(t)}\}$ and $w = (1,0)$, then we have
	$$A_{\Omega_t, w} = \varphi(t) + \frac{1}{2}\log \frac{|X_1|^2 + (1-e^{2\varphi})|X_2|^2}{(1-e^{2\varphi})^2}.$$
	
	We rewrite the right-hand side as
	$$\frac{1}{2} \log \left[ \frac{1}{1-e^{2\varphi}} |X_1|^2 + |X_2|^2 \right] + \frac{1}{2} \log \frac{e^{2\varphi}}{1-e^{2\varphi}}.$$
	We need the following simple lemma:
	\begin{lemma}
		Let $P$ and $Q$ be upper semi-continuous functions on an open set in $\mathbb{C}^n$. If $\log P$ and $\log Q$ are both plurisubharmonic, then $\log(P+Q)$ is also plurisubharmonic.
	\end{lemma}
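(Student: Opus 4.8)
The plan is to reduce the statement to one complex variable and then verify the sub-mean value inequality for $\log(P+Q)$ directly, the only genuine input being the superadditivity of the geometric mean.

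First I would record the harmless preliminaries. Since $\log P$ and $\log Q$ are (extended-real-valued) plurisubharmonic functions, we have $P=e^{\log P}\geq 0$ and $Q\geq 0$, so $P+Q$ is again upper semi-continuous and hence so is $\log(P+Q)$. Because plurisubharmonicity is local and can be tested on complex lines, it suffices to show: if $u:=\log P$ and $v:=\log Q$ are subharmonic on a disc $D\subset\mathbb{C}$, then $\log(P+Q)$ is subharmonic on $D$. (If $u\equiv-\infty$ on $D$ then $P\equiv 0$ and $\log(P+Q)=v$ there, and symmetrically, so we may assume neither is identically $-\infty$.)

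Next, fix $z_0\in D$ and $\rho>0$ with $\overline{D(z_0,\rho)}\subset D$, write $d\mu=\tfrac{d\theta}{2\pi}$ for the normalized measure on $[0,2\pi]$, and set $f(\theta)=P(z_0+\rho e^{i\theta})$, $g(\theta)=Q(z_0+\rho e^{i\theta})$. Exponentiating the sub-mean value inequalities for $u$ and $v$ gives
\[
P(z_0)\leq A:=\exp\!\Big(\int \log f\,d\mu\Big),\qquad Q(z_0)\leq B:=\exp\!\Big(\int \log g\,d\mu\Big),
\]
so that $P(z_0)+Q(z_0)\leq A+B$, and it remains to prove $\log(A+B)\leq\int\log(f+g)\,d\mu$. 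For this I would write $f+g=(A+B)\big(\tfrac{A}{A+B}\cdot\tfrac{f}{A}+\tfrac{B}{A+B}\cdot\tfrac{g}{B}\big)$ and apply the weighted arithmetic--geometric mean inequality to obtain the pointwise bound
\[
\log(f+g)\;\geq\;\log(A+B)+\tfrac{A}{A+B}\big(\log f-\log A\big)+\tfrac{B}{A+B}\big(\log g-\log B\big).
\]
Integrating this against $d\mu$ and using $\int\log f\,d\mu=\log A$ and $\int\log g\,d\mu=\log B$, the last two terms cancel, leaving $\int\log(f+g)\,d\mu\geq\log(A+B)\geq\log\big(P(z_0)+Q(z_0)\big)$. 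Thus $\log(P+Q)$ obeys the sub-mean value inequality on all sufficiently small circles about every point, hence is subharmonic on $D$; running this over all complex lines yields plurisubharmonicity on the original open set.

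I do not expect a real obstacle here: the content is precisely the superadditivity of geometric means just displayed (equivalently, the convexity and componentwise monotonicity of $\chi(s,t)=\log(e^{s}+e^{t})$, from which one could instead invoke the standard composition theorem for plurisubharmonic functions). The only things to be careful about are the reduction to complex lines and the bookkeeping at points where $P$ or $Q$ vanishes, both of which are routine.
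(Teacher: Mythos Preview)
Your argument is correct and, as you yourself note at the end, is precisely an unpacking of the paper's one-line proof: the paper simply invokes the convexity (and increasing dependence in each variable) of $(x,y)\mapsto\log(e^{x}+e^{y})$ together with the standard composition theorem for plurisubharmonic functions, and your AM--GM computation on circles is exactly the verification of that composition theorem in this special case.
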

	
	\begin{proof}
		It follows from the convexity of the function $(x,y) \mapsto \log(e^x + e^y)$ on $\mathbb{R}^2$.
	\end{proof}
	By the lemma, it suffices to show the plurisubharmonicity of $\log \frac{1}{1-e^{2\varphi}}|X_1|^2 = \log \frac{1}{1-e^{2\varphi}} + \log |X_1|^2$ and $\log |X_2|^2$. $\log\frac{1}{1-e^{2\varphi}}$ is subharmonic in $t$ because $x \mapsto \log\frac{1}{1-e^{2x}}$ is increasing and convex. Similarly, $x \mapsto \log \frac{e^{2x}}{1-e^{2x}}$ is also increasing and convex, thus $\log \frac{e^{2\varphi}}{1-e^{2\varphi}}$ is subharmonic in $t$. Therefore $A_{\Omega_t, w}$ is plurisubharmonic as desired.
	
	\section{Relation to the Ohsawa-Takegoshi $L^2$-extension theorem }
	In this section, we treat the {\it Ohsawa-Takegoshi} $L^2$ extension theorem. We will work in the following setting. Let $\Omega \subset \mathbb{C}^n$ be a bounded pseudoconvex domain and $w \in \Omega$ be a point. Let $\varphi$ be a plurisubharmonic function on $\Omega $ with $\varphi(w) \neq -\infty$. In this setting, the Ohsawa-Takegoshi $L^2$ extension theorem can be stated as follows:
	
	\begin{theorem}[\cite{OT}]
		Under these settings, there exists a holomorphic function $F$ on $\Omega$ with F(w) = 1 and
		$$\int_\Omega |F|^2 e^{-\varphi} \leq C e^{-\varphi(w)},$$
		where $C$ is a constant which only depends on the dimension and diameter of the domain.
	\end{theorem}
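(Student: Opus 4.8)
I would prove this via the classical $\bar\partial$-technique underlying the $L^2$-extension theorem, specialized to extending the constant function $1$ from the single point $w$ (one could instead reduce to the hypersurface version of \cite{OT} by writing $\{w\}$ as a transverse intersection of $n$ affine hyperplanes and iterating, or invoke the positivity-of-direct-images method behind \cite{BL}; I describe the direct argument). For each small $t>0$ fix a smooth cutoff $\chi_t$ with $\chi_t\equiv 1$ on $B(w,t)$, $\mathrm{supp}\,\chi_t\subset B(w,2t)\Subset\Omega$ and $|\bar\partial\chi_t|\le Ct^{-1}$; this is a candidate extension with $\chi_t(w)=1$, and $\bar\partial\chi_t$ is supported in the thin annulus $A_t=\{t\le|z-w|\le 2t\}$, away from the pole. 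I will correct it to a holomorphic function by solving $\bar\partial u_t=\bar\partial\chi_t$ with $u_t(w)=0$ and a uniform bound, so that $F_t:=\chi_t-u_t$ is holomorphic with $F_t(w)=1$.

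The core step is to solve this $\bar\partial$-equation on the pseudoconvex domain $\Omega$ against the \emph{borderline singular weight} $\varphi+2n\log|z-w|$, regularized to $\varphi+n\log(|z-w|^2+\varepsilon^2)$. For this I would use the twisted Bochner--Kodaira / Donnelly--Fefferman estimate: introduce the classical Ohsawa auxiliary function $\eta$ of $\psi:=\log|z-w|^2$, so that the negative contribution of the degenerate form $i\partial\bar\partial\psi$ together with the gradient term $i\partial\eta\wedge\bar\partial\eta$ is dominated for the right choice of $\eta$. This is precisely the Ohsawa--Takegoshi trick; it produces $u_t$ with
\[
\int_\Omega|u_t|^2 e^{-\varphi}\,\frac{d\lambda}{|z-w|^{2n}}\ \le\ C_n\cdot\frac{1}{|A_t|}\int_{A_t}e^{-\varphi}\,d\lambda ,
\]
uniformly in $t$ and $\varepsilon$, with $C_n$ depending only on $n$. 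The crude factor $t^{-1}$ from $|\bar\partial\chi_t|$ does not survive here because $\bar\partial\chi_t$ is radial, hence is measured against the part of the twisted curvature coming from $i\partial\eta\wedge\bar\partial\eta$, whose radial size exactly compensates it on $A_t$.

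Since $\bar\partial u_t=0$ on $B(w,t)$, $u_t$ is holomorphic there, and finiteness of $\int_{B(w,t)}|u_t|^2|z-w|^{-2n}\,d\lambda$ forces $u_t(w)=0$ because $|z-w|^{-2n}$ is non-integrable at a point of $\mathbb{C}^n$. Thus $F_t=\chi_t-u_t$ is holomorphic on $\Omega$ with $F_t(w)=1$, and, bounding $|z-w|\le\mathrm{diam}\,\Omega$ on the correction term and using $\mathrm{supp}\,\chi_t\subset B(w,2t)$,
\[
\int_\Omega|F_t|^2 e^{-\varphi}\ \le\ 2\int_{B(w,2t)}e^{-\varphi}\,d\lambda+C'\,\frac{(\mathrm{diam}\,\Omega)^{2n}}{|A_t|}\int_{A_t}e^{-\varphi}\,d\lambda .
\]
As $t\to 0$, the subaveraging property of the psh function $\varphi$ gives $\tfrac{1}{|A_t|}\int_{A_t}e^{-\varphi}\,d\lambda\to e^{-\varphi(w)}$, and likewise for the first term, so the right-hand side is $\le C\,e^{-\varphi(w)}$ with $C$ depending only on $n$ and $\mathrm{diam}\,\Omega$, for $t$ small. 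The uniform $L^2(e^{-\varphi})$-bound then makes $\{F_t\}$ a normal family; a locally uniform limit $F$ is holomorphic on $\Omega$ with $F(w)=1$ and $\int_\Omega|F|^2 e^{-\varphi}\le C\,e^{-\varphi(w)}$ by Fatou, which is the assertion.

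The step I expect to be the real obstacle is the twisted $L^2$-estimate itself: one must choose $\eta$, and control the regularization in $\varepsilon$, so that uniformly no $1/\varepsilon$ (or $1/t$) blow-up survives in the curvature term — this is exactly where the Ohsawa--Takegoshi identity with its extra twisting term is indispensable, as opposed to a bare Hörmander estimate. A secondary and more routine point is the limit $\tfrac{1}{|A_t|}\int_{A_t}e^{-\varphi}\to e^{-\varphi(w)}$, which is immediate when $\varphi$ is continuous at $w$ and otherwise follows from a standard regularization of $\varphi$ in a neighborhood of $w$.
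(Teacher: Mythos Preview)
The paper does not prove this statement: it is quoted from \cite{OT} as a background result and left without proof (the only argument supplied in that section is for the author's own Theorem~\ref{thm:optimal_Azukawa}). There is therefore nothing in the paper to compare your proposal against.

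As a standalone sketch your outline is the classical $\bar\partial$-scheme for extension from a point and is essentially correct in shape. One inaccuracy worth flagging: the convergence $\tfrac{1}{|A_t|}\int_{A_t}e^{-\varphi}\to e^{-\varphi(w)}$ does \emph{not} follow from the sub-mean-value property of $\varphi$; that inequality goes the wrong way once you pass to $e^{-\varphi}$ (upper semicontinuity of $\varphi$ only gives $\liminf e^{-\varphi}\ge e^{-\varphi(w)}$). You genuinely need the regularization you mention at the end --- prove the estimate first for a decreasing sequence of smooth psh $\varphi_j\searrow\varphi$ on a slightly smaller domain and pass to the limit by monotone convergence --- rather than treating it as an afterthought. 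The heuristic that the radial structure of $\bar\partial\chi_t$ lets the twist term $i\partial\eta\wedge\bar\partial\eta$ absorb the $t^{-1}$ is morally right but would also need to be made precise in a full write-up; this is exactly where the Ohsawa--Takegoshi identity does its work, as you say.
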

	
	To find an optimal value of $C$ is a long-standing problem. B{\l}ocki \cite{Blo} and Guan-Zhou \cite{GZ} obtained such a optimal result, and Berndtsson-Lempert \cite{BL} gave an alternative proof, in which a variational result in \cite{Ber2} was used.
	Here we state an optimal $L^2$-extension theorem in the form \cite{BL}, for the case of extension from a point.
	
	\begin{theorem}[\cite{BL}]
	Let $\Omega \subset \mathbb{C}^n$ be a bounded pseudoconvex domain and $\phi$ be a plurisubharmonic function on $\Omega$.
	Let $w \in \Omega$. Let $G$ be a ``Green-type'' function with a pole at $w$, i.e.\ $G$ is a negative plurisubharmonic function on $\Omega$ such that there exist continuous functions $A$ and $B$ on $\Omega$ satisfying
	$$\log|z-w|^2 + A(z) \geq G(z) \geq \log|z-w|^2 - B(z).$$
	Then there exists a holomorphic function $F$ on $\Omega$ with $F(w) = 1$ and
	$$\int_\Omega |F|^2 e^{-\phi} \leq \sigma_n e^{-\phi(w)+ nB(w)}, $$
	where $\sigma_n$ be a volume of the unit ball in $\mathbb{C}^n$.
	\end{theorem}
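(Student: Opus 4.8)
The plan is to follow the Berndtsson--Lempert strategy, deriving the estimate from Berndtsson's positivity theorem \cite{Ber2} (the Nakano positivity of the bundle of $L^2$-holomorphic functions) rather than from the Ohsawa--Takegoshi theorem itself. For $s<0$ set $\Omega_s:=\{z\in\Omega:G(z)<s\}$; these are pseudoconvex (sublevel sets of the plurisubharmonic function $G$), they contain $w$ because $G(w)=-\infty$, and $\Omega_s\uparrow\Omega$ as $s\uparrow 0$. Let $K_s(w)$ be the value at $w$ of the Bergman kernel of $\Omega_s$ for the weight $e^{-\phi}$, so $1/K_s(w)=\inf\{\int_{\Omega_s}|f|^2e^{-\phi}:f\in\mathcal O(\Omega_s),\ f(w)=1\}$. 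The theorem is equivalent to the lower bound $K_0(w):=\lim_{s\uparrow 0}K_s(w)\ \ge\ \sigma_n^{-1}e^{\phi(w)-nB(w)}$, and once that is established the required $F$ is obtained as a subsequential limit of the norm-minimizers on the $\Omega_s$ (a normal-families argument, using that these minimizers have $L^2(e^{-\phi})$-norm $\le\sigma_n e^{-\phi(w)+nB(w)}$).

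First I would show that $s\mapsto\log K_s(w)$ is convex on $(-\infty,0)$. Complexify the parameter: the set $\widetilde{\mathcal D}:=\{(z,\tau)\in\Omega\times\mathbb C:G(z)<\operatorname{Re}\tau<0\}$ is pseudoconvex, being the intersection of the sublevel set $\{G(z)-\operatorname{Re}\tau<0\}$ of the plurisubharmonic function $G(z)-\operatorname{Re}\tau$ with the pseudoconvex set $\Omega\times\{\operatorname{Re}\tau<0\}$, and the weight $(z,\tau)\mapsto\phi(z)$ is plurisubharmonic on it. Since the fibre of $\widetilde{\mathcal D}$ over $\tau$ is $\Omega_{\operatorname{Re}\tau}$, Berndtsson's theorem \cite{Ber2} gives that $\tau\mapsto\log K_{\operatorname{Re}\tau}(w)$ is plurisubharmonic on $\{\operatorname{Re}\tau<0\}$; depending only on $\operatorname{Re}\tau$, it is a convex function of $s=\operatorname{Re}\tau$. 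Moreover, since $\Omega_s\subset\Omega_{s'}$ for $s<s'$ and a smaller domain has a larger Bergman kernel, $s\mapsto\log K_s(w)$ is non-increasing.

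The heart of the argument is the behaviour of $K_s(w)$ as $s\to-\infty$. The two-sided bound $\log|z-w|^2-B(z)\le G(z)\le\log|z-w|^2+A(z)$ and the continuity of $A,B$ at $w$ give, for $s$ near $-\infty$, inclusions of Euclidean balls
$$B\bigl(w,\,e^{(s-A(w))/2-o(1)}\bigr)\ \subset\ \Omega_s\ \subset\ B\bigl(w,\,e^{(s+B(w))/2+o(1)}\bigr);$$
comparing Bergman kernels, using that the Bergman kernel of $B(w,r)$ at its centre with constant weight $e^{-\phi(w)}$ equals $e^{\phi(w)}/(\sigma_n r^{2n})$, together with $e^{-\phi}\to e^{-\phi(w)}$ near $w$, one obtains
$$\frac{e^{\phi(w)+nA(w)+o(1)}}{\sigma_n}\,e^{-ns}\ \ge\ K_s(w)\ \ge\ \frac{e^{\phi(w)-nB(w)-o(1)}}{\sigma_n}\,e^{-ns}.$$
In particular $\log K_s(w)=-ns+O(1)$, so by convexity $\tfrac{d}{ds}\log K_s(w)$ is non-decreasing and tends to $-n$ as $s\to-\infty$; hence it is $\ge-n$ for every $s$, i.e.\ $s\mapsto\log K_s(w)+ns$ is non-decreasing on $(-\infty,0)$. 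Letting $s\uparrow 0$ (with $K_s(w)\to K_0(w)$) and $s\to-\infty$ (with the lower estimate above) we conclude
$$\log K_0(w)\ \ge\ \lim_{s\to-\infty}\bigl(\log K_s(w)+ns\bigr)\ \ge\ \phi(w)-nB(w)-\log\sigma_n,$$
which is exactly the desired inequality.

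I expect the main obstacle to be the second-order bookkeeping in this last step: one must pin the constant down to precisely $\sigma_n e^{nB(w)}$, not merely to something comparable, which forces careful use of the \emph{lower} bound $G\ge\log|z-w|^2-B$ to trap $\Omega_s$ inside a Euclidean ball of the right radius, and a careful treatment of the weight $e^{-\phi}$ near $w$; for the latter one should first reduce, by the standard approximation of $\phi$ by a decreasing sequence of smooth plurisubharmonic functions and of $\Omega$ by pseudoconvex subdomains, to the case where $\phi$ is continuous near $w$, so that $e^{-\phi}\to e^{-\phi(w)}$ there. A secondary, routine point is to verify that Berndtsson's theorem applies in the stated generality and that $K_s(w)\to K_0(w)$ under the exhaustion $\Omega_s\uparrow\Omega$ (again via normal families and Fatou's lemma).
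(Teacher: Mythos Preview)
The paper does not give its own proof of this theorem; it is simply quoted from \cite{BL} as background for Theorem~\ref{thm:optimal_Azukawa}. Your proposal is precisely the Berndtsson--Lempert argument from \cite{BL}: complexify the sublevel parameter, apply \cite{Ber2} to get convexity of $s\mapsto\log K_s(w)$, compare $\Omega_s$ with Euclidean balls of radius $e^{(s+B(w))/2+o(1)}$ as $s\to-\infty$, and deduce that $\log K_s(w)+ns$ is non-decreasing. This is exactly the scheme the paper invokes (and then modifies, replacing the ball by the Azukawa indicatrix via Lemma~\ref{lem:infinitesimal-integral}, to obtain Theorem~\ref{thm:optimal_Azukawa}).

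One small clean-up: you write that the derivative of $\log K_s(w)$ ``tends to $-n$'' as $s\to-\infty$, but what you actually need (and what the two-sided bound gives) is only that $g(s):=\log K_s(w)+ns$ is convex and bounded above as $s\to-\infty$; a convex function bounded above at $-\infty$ is automatically non-decreasing, which is all you use. The remaining points you flag---approximating $\phi$ to make it continuous near $w$, and the exhaustion $K_s(w)\to K_0(w)$---are handled exactly as in \cite{BL}.
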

	In this formulation, a Robin-type constant $B(w)$ has an important roll. 
	When the subvariety is a one-point set, replacing it by the volume of the Azukawa indicatrix, we can improve the result.
	\begin{theorem}\label{thm:optimal_Azukawa}
		Let $\Omega \subset \mathbb{C}^n$ be a bounded pseudoconvex domain and $w \in \Omega$ be a point.
		Let $g_{\Omega,w}$ be the pluricomplex Green function on $\Omega$ with a pole at $w$ and $A_{\Omega, w}$ be the corresponding Azukawa peudometric.
		We assume that $\limsup$ in the definition of $A_{\Omega,w}$ converges:
		$$A_{\Omega, w}(X) = \lim_{\lambda \to 0} g_{\Omega, w}(w + \lambda X)- \log |\lambda|$$
		for every $X \in \mathbb{C}^n$.
		%Note that this condition is satisfied if $\Omega$ is bounded and hyperconvex.
		Then, for every $\varphi \in PSH(\Omega)$, there exists a holomorphic function $F \in \mathcal{O}(\Omega)$ such that $f(w) = 1$ and
		$$\int_{\Omega} |F|^2 e^{-\varphi} \leq V(I_{\Omega,w})e^{-\varphi(0)},$$
		where $V(I_{\Omega, w})$ denotes the Euclidean volume of $I_{\Omega, w}$.
	\end{theorem}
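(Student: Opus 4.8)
The plan is to apply the Berndtsson--Lempert extension theorem to the pluricomplex Green function itself, reading off from its proof a slightly sharper constant than the one in the quoted statement. Set $G := 2g_{\Omega,w}$. Since $\Omega$ is bounded and pseudoconvex, $G$ is a negative plurisubharmonic function admitting continuous bounds $\log|z-w|^2 + A(z) \ge G(z) \ge \log|z-w|^2 - B(z)$ (for the upper bound compare $g_{\Omega,w}$ with the Green function of a ball containing $\Omega$; for the lower bound, near $w$, with that of a small ball centred at $w$), so $G$ is a Green-type function in the sense of \cite{BL}. The point to keep track of in that proof is that the constant $\sigma_n e^{nB(w)}$ enters only at the final step, where the sublevel set $\Omega^t := \{z \in \Omega : G(z) < t\}$ is enlarged, via the lower bound on $G$, to the ball $\{\,|z-w|^2 < e^{t+B(w)}\,\}$ of volume $\sigma_n e^{n(t+B(w))}$. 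Not performing this last enlargement, the same argument produces a holomorphic $F$ on $\Omega$ with $F(w)=1$ and
\[
\int_\Omega |F|^2 e^{-\varphi} \;\le\; \Bigl(\, \limsup_{t\to-\infty} e^{-nt}\,\lambda(\Omega^t)\,\Bigr)\, e^{-\varphi(w)},
\]
with $\lambda$ Lebesgue measure; the constant $\sigma_n e^{nB(w)}$ of \cite{BL} is merely the bound $\limsup_{t\to-\infty} e^{-nt}\lambda(\Omega^t) \le \sigma_n e^{nB(w)}$ coming from $\Omega^t \subseteq \{\,|z-w|^2 < e^{t+B(w)}\,\}$.

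It then remains to evaluate this $\limsup$, and here the hypothesis $A_{\Omega,w}(X) = \lim_{\lambda\to 0}(g_{\Omega,w}(w+\lambda X) - \log|\lambda|)$ enters. Changing variables by $z = w + e^{t/2}X$, which scales Lebesgue measure by $e^{nt}$, gives
\[
e^{-nt}\,\lambda(\Omega^t) \;=\; \int_{\mathbb{C}^n} \mathbf{1}_{\{\,G(w+e^{t/2}X) - t \,<\, 0\,\}}(X)\; d\lambda(X).
\]
For each fixed $X$ one has $G(w+e^{t/2}X) - t = 2\bigl(g_{\Omega,w}(w+e^{t/2}X) - t/2\bigr) \to 2A_{\Omega,w}(X)$ as $t\to-\infty$, so the integrand converges pointwise to $\mathbf{1}_{\{A_{\Omega,w}(X)<0\}}(X) = \mathbf{1}_{I_{\Omega,w}}(X)$ off the set $\{A_{\Omega,w}=0\}$; the latter is the graph $\{\,e^{-A_{\Omega,w}(\widehat X)}\widehat X : \widehat X \in S^{2n-1}\,\}$ over the unit sphere (here $A_{\Omega,w}$ is finite on $\mathbb{C}^n\setminus\{0\}$ by boundedness of $\Omega$ and since $w$ is interior), hence Lebesgue-null. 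Moreover, for $t$ sufficiently negative the lower bound on $G$ confines the rescaled set inside a fixed ball $\{|X|^2 < e^{B_0}\}$, so the integrands are dominated by a fixed $L^1$ function. Dominated convergence then yields $\limsup_{t\to-\infty} e^{-nt}\lambda(\Omega^t) = \lim_{t\to-\infty} e^{-nt}\lambda(\Omega^t) = \lambda(I_{\Omega,w}) = V(I_{\Omega,w})$, and substituting into the estimate above gives exactly $\int_\Omega |F|^2 e^{-\varphi} \le V(I_{\Omega,w})\, e^{-\varphi(w)}$.

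The change of variables and the dominated-convergence step are routine real analysis. The step that requires genuine care is the claim in the first paragraph that the Berndtsson--Lempert argument delivers the sublevel-set constant $\limsup_{t\to-\infty} e^{-nt}\lambda(\Omega^t)$ rather than only its packaged consequence $\sigma_n e^{nB(w)}$: making this precise means revisiting their monotonicity/convexity argument for the minimal $L^2(e^{-\varphi})$-norm of an extension over the shrinking domains $\Omega^t$, together with its behaviour as $t\to-\infty$, and checking that the lower bound on $G$ was used nowhere except in the comparison with a ball. Once that is in place, the improvement over \cite{BL} is automatic, since $\Omega^t \subseteq \{\,|z-w|^2 < e^{t+B(w)}\,\}$ forces $V(I_{\Omega,w}) = \lim_{t\to-\infty} e^{-nt}\lambda(\Omega^t) \le \sigma_n e^{nB(w)}$, with equality precisely when $A_{\Omega,w}$ is constant on $S^{2n-1}$.
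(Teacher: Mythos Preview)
Your approach is essentially the same as the paper's: both reuse the Berndtsson--Lempert argument verbatim and replace only the final comparison with a ball by a lemma identifying the rescaled sublevel sets $e^{-t/2}\{g_{\Omega,w}<t/2\}$ with the Azukawa indicatrix in the limit $t\to-\infty$. The paper formulates this lemma for a general continuous integrand $\chi\ge 0$ (so that it applies with $\chi=e^{-\varphi}$ after approximation), proving the inequality $\limsup_{t\to-\infty} e^{-nt}\int_{\Omega_t}\chi \le V(I_{\Omega,w})\,\chi(w)$ via the set inclusion $\limsup_{t\to-\infty}(e^{-t/2}\Omega_t)\subset\overline{I_{\Omega,w}}$, which is exactly your change-of-variables and dominated-convergence step specialized from $\chi=1$.
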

	
	\begin{proof}
	    The proof is essentially the same as one in \cite{BL}.
		The only difference is that the ball in \cite[Lemma 3.3]{BL} will be replaced with the Azukawa indicatrix.
		Precisely, we use the following lemma:
		\begin{lemma}\label{lem:infinitesimal-integral}
			Let $\chi \geq 0$ be a continuous function on $\overline{\Omega}$.
			Then we have that
			$$\limsup_{t \to -\infty} e^{-kt} \int_{\Omega_t} \chi \leq V(I_{\Omega,w})\chi(w),$$
			where $\Omega_t := \{g_{\Omega, w} <t/2\}$.
		\end{lemma}
		
		%Note that our convention on pluricomplex Green functions is different from one in [BL].
		
		\begin{proof}%[Proof of Lemma \ref{lem:infinitesimal-integral}]
			We can assume $w=0$.
			Since $\chi$ is continuous, it is sufficient to prove that $$\limsup_{t \to -\infty} (e^{-t/2}\Omega_t) \subset \overline{I_{\Omega, w}}.$$
			A point $z$ belongs to the set $\limsup_{t \to -\infty} (e^{-t/2}\Omega_t)$ if and only if for every $p<0$ there exists $t<p$ such that $z \in e^{-t/2} \Omega_t$.
			The last condition can be rewritten as
			\begin{align*}
			&e^{t/2} z \in \Omega_t\\
			\Longleftrightarrow \,& g_{\Omega,0}(e^{t/2}z) < t/2\\
			\Longleftrightarrow \,& g_{\Omega,0}(e^{t/2}z) - \log {e^{t/2}} < 0.
			\end{align*}
			By the assumption, the left-hand side of the last line converges to $A_{\Omega, 0}(z)$. Therefore $A_{\Omega,0} (z) \leq 0$ and thus $z \in \overline{I_{\Omega, w}}$. The lemma is proved.
			
		\end{proof}
		
		Using this lemma and repeating the argument in \cite{BL}, we can prove Theorem \ref{thm:optimal_Azukawa}.
	\end{proof}
	
	It may be interesting to generalize Theorem \ref{thm:optimal_Azukawa} to cases where subvarieties have positive dimension.
	To do that, we may have to consider the Azukawa pseudometric for pluricomplex Green function with poles along a subvariety.
	
	\vskip3mm
	{\bf Acknowledgment. } The author would like to thank Prof.\ Shigeharu Takayama and Prof.\ Takeo Ohsawa for valuable comments.

\bibliographystyle{plain}

\end{document}